\renewcommand{\P}{\mathcal{P}}
\newcommand{\up}[1]{#1^{\uparrow}}
\newcommand{\down}[1]{#1^{\downarrow}}
\newcommand{\ud}[1]{#1^{\uparrow\downarrow}}
\newcommand{\du}[1]{#1^{\downarrow\uparrow}}
\renewcommand{\emph}{\textbf}
\newcommand{\jty}{J^{\infty}}
\newcommand{\mty}{M^{\infty}}
\newcommand{\nomi}{\mathbf{i}}
\newcommand{\nomj}{\mathbf{j}}
\newcommand{\cnomm}{\mathbf{m}}
\newcommand{\cnomn}{\mathbf{n}}
\title{Categories: How I Learned to Stop Worrying and Love Two Sorts}
\author{Willem Conradie\inst{1} \and
Sabine Frittella\inst{2} \and
Alessandra Palmigiano\inst{1,2}
\and
Michele Piazzai\inst{2}
\and
Apostolos Tzimoulis\inst{2}
\and
Nachoem M. Wijnberg\inst{3}
}
\institute{Department of Pure and Applied Mathematics, University of Johannesburg\\
\email{wconradie@uj.ac.za}
\and
Faculty of Technology, Policy and Management, Delft University of Technology\\
\email{\{S.S.A.Frittella,A.Palmigiano,M.Piazzai,A.Tzimoulis-1\}@tudelft.nl}
\and
Amsterdam Business School, University of Amsterdam\\
\email{n.m.wijnberg@uva.nl,}
}
\begin{document}
\maketitle
  \begin{abstract}
    RS-frames were introduced by Gehrke as relational semantics for  substructural logics. They are two-sorted structures, based on RS-polarities with additional relations used to interpret modalities.
We propose an intuitive, epistemic interpretation of RS-frames for modal logic, in terms of categorization systems and agents' subjective interpretations of these systems. Categorization systems are a key to any decision-making process and are widely studied in the social and management sciences.

A set of objects together with a set of properties and an incidence relation connecting objects with their properties forms a polarity which can be `pruned' into an RS-polarity. Potential categories emerge as the Galois-stable sets of this polarity, just like the concepts of Formal Concept Analysis. An agent's beliefs about objects and their properties (which might be partial) is modelled by a relation which gives rise to a normal modal operator expressing the agent's beliefs about category membership. Fixed-points of the iterations of the belief modalities of all agents are used to model categories constructed through social interaction.\\

\textbf{Keywords:} lattice-based modal logic, RS-frames, categorization theory, epistemic logic, formal concept analysis.
  \end{abstract}

\noindent\textbf{Acknowledgement:}
The research of the first author has been made possible by the National Research Foundation of South Africa, Grant number 81309.
The research of the second to fourth author has been made possible by the NWO Vidi grant 016.138.314, by the NWO Aspasia grant 015.008.054, and by a Delft Technology Fellowship awarded in 2013.

\section{Introduction}
Relational semantic frameworks for logics algebraically captured by varieties of normal  lattice expansions\footnote{A {\em normal lattice expansion} is a bounded lattice endowed with operations of finite arity, each coordinate of which is either positive (i.e.\ order-preserving) or negative (i.e.\ order-reversing). Moreover, these operations  are either finitely join-preserving (resp.\ meet-reversing) in their positive (resp.\ negative) coordinates, or are finitely meet-preserving (resp.\ join-reversing) in their positive (resp.\ negative) coordinates.}  have been intensely investigated for more than three decades
\cite{bimbo2001four},
\cite{crapo1982unities},
\cite{dunn1990gaggle}, 
\cite{duntsch2004relational},
\cite{galatos2007residuated}, 
\cite{Gehrke}, 
\cite{hartonas15modal},
\cite{jarvinen2005relational,kamide2002kripke},
\cite{Kurtonina,moshier2014}.
\cite{ploscica1995,Suz11}.
However,  none of these frameworks has gained the same pre-eminence and success as Kripke semantics. 
Indeed,  the extant proposals are regarded as significantly less intuitive than Kripke structures, especially w.r.t.\ their possibility to support the various established interpretations of modal operators (e.g.\ epistemic, temporal, dynamic), and hence doubts have been raised as to the suitability of these logics for applications. Various directions have been explored to try and cope with these difficulties, such as: (a) attempts to provide a conceptual justification to some of the distinctive features of these semantics (for instance, in  \cite{Gehrke}, a conceptual motivation has been given for the `two-sortedness' of the relational semantics for substructural logics introduced in the same paper  in terms of a duality between states and information quanta);
(b) recapturing the usual definition of the interpretation clause of modal operators in a generalized context
\cite{hartonas15modal,hartonas15order};
(c) improving the modularity of mathematical theories such as correspondence theory, to facilitate the transfer of results across different semantic settings. The latter direction has been implemented specifically for lattice-based logics in
\cite{CoCrPaZh16}, \cite{ConPal13,CoPa16}, and pursued more  in general in
\cite{CoCr14,ConPalSou},
\cite{CoFoPaSo15,CoGhPa14},
\cite{ConPal12,CoPaSoZh15,CoPaZh15,CoRo14},
\cite{FrPaSa14},
\cite{GMPTZ},
\cite{PaSoZh15b,PaSoZh15a}.

The contribution of the present paper pertains to direction (a): we propose categorization theory in management science as a concrete frame of reference for understanding the {\em RS-semantics} of lattice-based modal logic, and we argue that, when understood in this light, a natural epistemic interpretation can be given to the modal operators, which captures e.g.\ the factivity and positive introspection of knowledge. 

Our starting point is the connection, mentioned also in \cite{Gehrke}, between RS-semantics and Formal Concept Analysis (FCA) \cite{Wille}. Namely, 
{\em RS-frames} for normal lattice-based modal logics  are based on {\em polarities}, that is, tuples $(A, X, \perp)$ such that $A$ and $X$ are sets, and $\perp\ \subseteq \ A\times X$. In FCA, polarities can be understood as {\em formal contexts}, consisting of objects (the elements of $A$) and properties  (the elements of $X$) with the relation $\perp$ indicating which object satisfies which property.  It is well known that any polarity induces a Galois connection between the powersets of $A$ and $X$, the stable sets of which form a complete lattice, and in fact, any complete lattice is isomorphic to one arising from some polarity. This representation theory for general lattices, due to Birkhoff, provides the polarity-to-lattice direction of the duality developed in \cite{Gehrke}, and is also at the heart of FCA. Indeed, the Galois-stable sets arising from formal contexts can be interpreted as {\em formal concepts}. One of the most felicitous insights of FCA is that concepts are  endowed by construction with a double interpretation: an {\em extensional} one, specified by the objects which are instances of the formal concept, and an {\em intensional} one, specified by the properties shared by any object belonging to the concept.

The second key step is the arguably natural  idea that categories and classification systems, as studied in social sciences and management science, are a very concrete setting of application of the insights of FCA. 

Indeed, in social science and management science, {\em categories}   are understood as types of collective identities for broad classes e.g.\ of market products, organizations or individuals. Categorization theory recognizes categories as a key aspect of any decision-making process, in that they structure the space of options by defining the boundaries of meaningful comparisons between the available alternatives \cite{hsu2011typecasting}, \cite{wijnberg-gap}, \cite{wijnberg2011classification}.
Also, categories function as cognitive sieves, filtering out those features which are redundant or less essential to the decision-making, thus contributing to minimize the agents' cognitive efforts. Examples of categories are musical genres, which are widely applied as tools to compress and convey relevant information about a musical product to its potential audience.  Structuring information and decision-making along the faultlines of genres is so established a practice in the creative industries that genres have become the main way to structure competition as well as to create consumer group identity.

An aspect of categories which is very much highlighted in the categorization theory literature is that they never occur in isolation; rather, they arise in the context of {\em categorization systems} (e.g.\ taxonomies), which are typically organized in hierarchies of super- (i.e.\ less specified) and sub- (i.e.\ more specified) categories. This observation agrees with the FCA treatment, according to which concepts arise embedded in their concept lattice.

One of the open challenges in the extant literature is how to reconcile  the view on categories which defines them in terms of the objects (e.g.\ products) belonging to that category 
with another view which defines categories in terms of the  features enjoyed by its members. The intensional and extensional perspectives on concepts brought about by FCA provide an elegant reconciliation of the two views on categories, which gives a second clue that the FCA perspective on categorization theory can be fruitful.

In recent years, a substantial research stream in social and management science explores the dynamic aspects of categorization \cite{hsu2011typecasting}, \cite{navis2010new} . For instance, {\em category emergence} investigates how new categories are created, either ex nihilo or through the recombination of existing ones, and how the interaction of relevant groups of agents, such as the media or the reviewers, plays a role in this process.
The aspect of social interaction is essential to understand how categories arise and are put to use:  although they can be seen to arise from factual pieces of information about the world (e.g., the products available in a given market and their features), a critical component of their nature cannot be reduced to factual information. In other words, categories are {\em social artifacts}, and reasoning about them requires a peculiar combination of factual truth, individual perception and social interaction. 

The main point of interest and the conceptual contribution of the present proposal concerns precisely the formalization of the  subjective  and social aspects of this emergence. Namely, we observe that the agents' subjective perspective on products and features can be naturally modelled by associating each agent with
a binary relation $R\subseteq A\times X$ on the database $(A, X, \perp)$, which represents the subjective filters superimposed by each agent on the information of the database. That is,  for every product $a\in A$ and every feature $x\in X$, we read $aRx$ as `product $a$ has feature $x$ according to the agent'.  By general order-theoretic facts, these relations\footnote{Actually, those which are RS-compatible, cf.\ Definition \ref{Def:RSFrame}.} induce normal modal operators on the categorization system associated with the database. 
These modal operators enrich the basic propositional logic of the categorization systems. In this enriched logical language, it is easy to distinguish between `objective' information (stored in the database), encoded in the formulas of the modal-free fragment of the language, and the agents' subjective interpretation of the `objective' information, encoded in formulas in which modal operators occur. This language is expressive enough to encode agents' beliefs/perceptions regarding other agents' beliefs/perceptions, and so on. 
Again, this makes it possible to define {\em fixed points} of these regressions, similarly to the way in which common knowledge is defined in classical epistemic logic \cite{fagin2003reasoning}. Intuitively, these fixed points represent the stabilization of a process of social interaction; for instance, the consensus reached by a group of agents regarding a given category. Clearly, market dynamics are bound to create further destabilization, necessitating a new round of interaction in order to establish a new equilibrium. Further directions will be to generalize the framework of dynamic epistemic logic \cite{baltag1998logic} to the setting outlined in the present paper, and further develop the theory of lattice-based mu-calculus initiated in \cite{CoCrPaZh16}.



{\it Structure of the paper.} In Section \ref{sec:prelim}, we collect the necessary definitions and basic facts about RS-semantics. In Section \ref{sec:categorization}, we discuss how the mathematical environment introduced in the previous section can be understood using categories and categorization systems as the framework of reference. In particular, we show how normal modal operators on lattices can support  an epistemic interpretation.   In Section \ref{sec:social-construction}, we build on the epistemic interpretation of the modal operators, and introduce a common knowledge-type construction to account for a view of categories as the outcome of social interaction. In Section \ref{CCL} we collect our conclusions. More technical background is relegated to  Appendix \ref{Appendix}, while the proofs of some technical lemmas can be found in Appendix \ref{Appendix:TechLemmas}.

\section{Preliminaries}
\label{sec:prelim}

In this section we recall some preliminaries on perfect lattices, RS-polarities, generalized Kripke frames and formal concept analysis. We will assume familiarity with the basics of lattice theory (see e.g.\ \cite{DaveyPriestley2002}).
\subsection{Perfect lattices}
\label{ssec:Prelim:Lattices}
A bounded lattice $\mathbb{L} = (L, \wedge, \vee, 0, 1)$ is  {\em complete} if all subsets $S\subseteq L$ have both a supremum $\bigvee S$ and an infimum $\bigwedge S$. 
An element $a$ in  $\mathbb{L}$ is  {\em completely join-irreducible} if, for any $S\subseteq\mathbb{L}$, $a = \bigvee S$ implies $a \in S$. {\em Complete meet-irreducibility} is defined order-dually. The sets of completely join- and meet-irreducible elements of $\mathbb{L}$ are denoted by $J^{\infty}(\mathbb{L})$ and $M^{\infty}(\mathbb{L})$, respectively.

A complete lattice $\mathbb{L}$ is called {\em perfect} if it is join-generated by its completely join-irreducibles, and meet-generated by its completely meet-irreducibles. That is, $\mathbb{L}$ is perfect if for any $u\in \mathbb{L}$, we have
%
$\bigvee \{j\in J^{\infty}(\mathbb{L})\mid j\leq u\}=u= \bigwedge\{m\in M^{\infty}(\mathbb{L})\mid u\leq m\}$.

\subsection{Polarities and Birkhoff's representation theorem}
\label{ssec:Prelim:RS}

\begin{definition}
\label{def:polarities}
A {\em polarity} is a triple $\mathbb{P} = (A,X,\perp)$ where $A$ and $X$ are sets, and $\perp\ \subseteq\ A \times X$ is a relation.
 For every polarity $\mathbb{P}$, we define the functions $\up{(\cdot)}$ (upper) and $\down{(\cdot)}$ (lower)\footnote{In what follows, we abuse notation and write $\up{a}$ for $\up{\{a\}}$ and $\down{x}$ for $\down{\{x\}}$ for every $a\in A$ and $x\in X$.} between the posets $(\P(A), \subseteq)$ and $(\P(X) , \subseteq)$, as follows:
\begin{center}
for $U\in\P(A)$, let $\up{U} := \{
x\in X \mid \forall a( a\in U \rightarrow a\perp x)\}$,
\\
for $V\in \P(X)$, let
$\down{V} := \{ a\in A \mid \forall x(x\in V\rightarrow a\perp x) \}$.
\end{center}
\end{definition}
The maps $\up{(\cdot)}$ and $\down{(\cdot)}$  form a \emph{Galois connection} between $(\P(A), \subseteq)$ and $(\P(X) , \subseteq)$, i.e.\ $V\subseteq \up{U}$ iff $U \subseteq \down{V}$ for all $U \in \P(A)$ and $V \in \P(X)$. Well-known consequences of this fact are: the composition maps $\ud{(\cdot)}: = \down{(\cdot)}\circ \up{(\cdot)}$ and $\du{(\cdot)}: = \up{(\cdot)}\circ \down{(\cdot)}$ are closure operators on $(\P(A), \subseteq)$ and $(\P(X), \subseteq)$, respectively;\footnote{Recall that a closure operator on a poset $(S, \leq)$ is a map $f: S \rightarrow S$  which is extensive ($\forall a \in S [a \leq f(a)]$), monotone ($\forall a, b \in S [a \leq b \Rightarrow f(a) \leq f(b)]$) and idempotent ($\forall a \in S [f(a) = f(f(a))]$).}  
The set of all {\em Galois-stable} subsets of $A$ (i.e.\ those $U \in \P(A)$ such that $\ud{U} = U$) forms a complete sub-semilattice of $(\P(A), \bigcap)$, which we denote by $\mathbb{P}^+$;\footnote{Likewise, The set of all {\em Galois-stable} subsets of $X$ (i.e.\ those $V \in \P(X)$ such that $\du{V} = V$) forms a complete sub-semilattice of $(\P(X), \bigcap)$.} since it is complete, the semilattice $\mathbb{P}^+$ is in fact a lattice, where meet is set-theoretic intersection and join is the closure of the set-theoretic union. If fact, Birkhoff showed that every complete lattice is isomorphic to  $\mathbb{P}^+$ for some polarity $\mathbb{P}$. This lattice can be identified with the lattice of {\em concepts} arising from $\mathbb{P}$ (this terminology comes from Formal Concept Analysis), i.e.\ tuples $(C, D)$ s.t.\ $C\subseteq A$, $D\subseteq X$ and $\down{D} = C$ and $\up{C} = D$.\footnote{\label{footnote:intension} Sometimes $C$ and $D$ are referred to as the {\em extension} and the {\em intension} of a concept, respectively.}  Concepts (resp.\ Galois stable subsets of $X$ and of $A$) can be characterized as (members of) tuples $(\ud{U}, \up{U})$ and $(\down{V}, \du{V})$ for any $U\subseteq A$ and $V\subseteq X$.

Let us conclude the present subsection by introducing some notation and showing some useful facts. Polarities $(A, X, \perp)$ induce `specialization pre-orders' on $A$ and $X$ defined as follows:  $x\leq y$ 
iff $\forall a (a\perp x\rightarrow a\perp y)$ for all $x, y\in X$, and $a\leq b$ 
iff $\forall x (b\perp x\rightarrow a\perp x)$ for all $a, b\in A$. Clearly, $\leq \circ \perp\circ \leq\  \subseteq \ \perp$.
For every $b\in A$ and $z\in X$, let $z{\uparrow}: = \{x\mid z\leq x\}$, and 
$b{\downarrow}: = \{a\mid a\leq b\}$. 
The proofs of the following lemma and corollary can be found in Appendix \ref{Appendix:TechLemmas}.

\begin{lemma}\label{Lemma:Galois:Stable} $z{\uparrow}$ and $b{\downarrow}$ are Galois-stable for all $b\in A$ and $z\in X$.\end{lemma}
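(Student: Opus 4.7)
The plan is to reduce the claim to the fact that $\ud{(\cdot)}$ and $\du{(\cdot)}$ are closure operators, whose fixed points are precisely the Galois-stable subsets of $A$ and $X$ respectively. Concretely, I would show that $b{\downarrow} = \ud{b}$ and dually that $z{\uparrow} = \du{z}$; since any set of the form $\ud{U}$ (resp.\ $\du{V}$) is automatically Galois-stable, the lemma follows at once.

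To verify $b{\downarrow} = \ud{b}$, first unpack the definition of the specialization preorder on $A$: for any $a\in A$, the statement $a \leq b$ is literally $\forall x(b\perp x \rightarrow a\perp x)$, which is exactly the set inclusion $\up{b} \subseteq \up{a}$. Then apply the defining adjunction of the Galois connection with $U := \{a\}$ and $V := \up{b}$: by $V \subseteq \up{U}$ iff $U \subseteq \down{V}$, the inclusion $\up{b} \subseteq \up{a}$ is equivalent to $\{a\} \subseteq \ud{b}$, i.e.\ $a\in \ud{b}$. Hence $b{\downarrow} = \ud{b}$, as desired.

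The case of $z{\uparrow}$ is entirely symmetric: $x \in z{\uparrow}$ iff $z \leq x$ iff $\down{z} \subseteq \down{x}$, which by the same Galois connection (with the roles swapped) is equivalent to $\{x\} \subseteq \du{z}$, yielding $z{\uparrow} = \du{z}$, which is Galois-stable.

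I do not anticipate any real obstacle: the content of the lemma is essentially the observation that the specialization preorders induced by a polarity can be read off from the Galois closures of singletons, so principal downsets/upsets with respect to those preorders coincide with those closures. The only point requiring mild care is keeping track of the direction of the Galois connection together with the order-reversal built into the definition of $\leq$ on $A$.
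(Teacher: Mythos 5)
Your proof is correct, but it takes a different route from the paper's. The paper argues directly and element-wise: to show $z{\uparrow}$ is stable it fixes $x\in \du{(z{\uparrow})}$ and an $a$ with $a\perp z$, uses $\perp\circ\leq\ \subseteq\ \perp$ to conclude $a\in \down{(z{\uparrow})}$, and hence $a\perp x$, so $z\leq x$; it then deduces the identities $\du{z}=z{\uparrow}$ and $\ud{b}=b{\downarrow}$ as a separate corollary. You invert this decomposition: you first establish the identities $b{\downarrow}=\ud{b}$ and $z{\uparrow}=\du{z}$ by reading $a\leq b$ as $\up{b}\subseteq\up{a}$ (resp.\ $z\leq x$ as $\down{z}\subseteq\down{x}$) and instantiating the adjunction $V\subseteq\up{U}$ iff $U\subseteq\down{V}$ at a singleton, and then invoke the standard fact that sets of the form $\ud{U}$ and $\du{V}$ are exactly the Galois-stable ones. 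There is no circularity, since your derivation of the identities uses only the adjunction and not the lemma itself. What you gain is economy: one short adjunction computation delivers both the lemma and the paper's subsequent corollary simultaneously, and it makes transparent that principal up- and down-sets of the specialization preorders are precisely closures of singletons. What the paper's version buys is self-containedness at the lowest level: it needs only the compatibility $\perp\circ\leq\ \subseteq\ \perp$ and the definitions, without appealing to the closure-operator characterization of stable sets (which the paper does, however, state as a known consequence of the Galois connection, so your reliance on it is legitimate).
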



\begin{corollary}\label{Cor:Up:Down}
$\du{z} = z{\uparrow}$ and $\ud{b} = b{\downarrow}$ for all $b\in A$ and $z\in X$.
\end{corollary}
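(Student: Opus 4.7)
My plan is to observe that the two equalities follow by unpacking the definitions, with the preceding lemma supplying exactly the half of the argument that is not immediate. I will focus on $\du{z} = z{\uparrow}$; the argument for $\ud{b} = b{\downarrow}$ is symmetric, swapping the roles of $A$ and $X$ and of the orders.

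First I would unfold $\du{\{z\}} = \up{\down{\{z\}}}$. By definition of $\down{(\cdot)}$, we have $\down{\{z\}} = \{a \in A \mid a \perp z\}$, and then by definition of $\up{(\cdot)}$,
\[
\du{\{z\}} \;=\; \{x \in X \mid \forall a\,(a \perp z \to a \perp x)\}.
\]
But the right-hand condition is exactly the defining condition of $z \leq x$ in the specialization preorder on $X$ introduced just before the lemma. Hence $\du{\{z\}} = \{x \in X \mid z \leq x\} = z{\uparrow}$, as required.

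If one prefers to route through the lemma rather than read off the answer from the definition of $\leq$, the same conclusion is reached in two steps: the inclusion $\du{\{z\}} \subseteq z{\uparrow}$ follows from $\{z\} \subseteq z{\uparrow}$, monotonicity of $\du{(\cdot)}$, and the fact established by the lemma that $z{\uparrow}$ is Galois-stable, so $\du{z{\uparrow}} = z{\uparrow}$; the reverse inclusion uses $\leq \circ \perp \circ \leq \ \subseteq\ \perp$ to see that whenever $z \leq x$ and $a \perp z$, one has $a \perp x$, so that $x \in \du{\{z\}}$. Either way, there is no real obstacle: the statement is essentially a restatement of how the specialization preorder was defined, and the lemma guarantees consistency with Galois-stability.

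Finally, I would note explicitly that the dual computation works: $\up{\{b\}} = \{x \mid b \perp x\}$ and $\ud{\{b\}} = \{a \mid \forall x\,(b \perp x \to a \perp x)\}$, which by the defining condition of $a \leq b$ on $A$ is precisely $b{\downarrow}$. I do not anticipate any step here that requires more than a careful reading of Definition~\ref{def:polarities} together with the definitions of $\leq$, $z{\uparrow}$ and $b{\downarrow}$.
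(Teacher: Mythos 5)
Your proposal is correct, and your primary route is slightly more direct than the paper's. You simply unfold $\du{\{z\}} = \up{(\down{\{z\}})} = \{x \in X \mid \forall a\,(a\perp z \to a\perp x)\}$ and observe that this set is, verbatim, the defining condition of $z \leq x$, so $\du{z} = z{\uparrow}$ is a definitional identity (and dually for $\ud{b} = b{\downarrow}$); on this reading the preceding lemma is not needed at all, and in fact follows, since $z{\uparrow}$ is then the image of the closure operator $\du{(\cdot)}$ and hence Galois-stable. The paper instead argues by double inclusion: $\du{z}\subseteq z{\uparrow}$ because $z{\uparrow}$ is Galois-stable (the lemma), contains $z$, and $\du{z}$ is the least such set; and $z{\uparrow}\subseteq \du{z}$ directly from $\perp\circ\leq\ \subseteq\ \perp$. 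Your second, alternative route (monotonicity of $\du{(\cdot)}$ together with stability of $z{\uparrow}$ for one inclusion, and $\leq\circ\perp\circ\leq\ \subseteq\ \perp$ for the other) coincides with the paper's argument, so you have effectively given both proofs. What the direct unfolding buys is economy and the observation that the corollary is really a restatement of the specialization preorder; what the paper's route buys is that it exercises the lemma and the characterization of $\du{z}$ as the smallest Galois-stable set containing $z$, which is the pattern reused elsewhere. Either way, no gap.
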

Summing up, the concepts generated by each $a\in A$ and $x\in X$ are $(a{\downarrow}, \up{a})$ and $(\down{x}, x{\uparrow})$ respectively.
 \subsection{RS-polarities and dual correspondence for perfect lattices}
 As mentioned early on,  every complete lattice is isomorphic to  $\mathbb{P}^+$ for some polarity $\mathbb{P}$.
 When specializing to distributive lattices and Boolean algebras, the well-known dualities obtain between set-theoretic structures and {\em perfect} algebras. In particular, perfect distributive lattices are dual to posets, and perfect (i.e.\ complete and atomic) Boolean algebras are dual to sets. 
The question then arises: which polarities are dual to perfect lattices? The answer was given by Gehrke in \cite{Gehrke}, where the so-called reduced and separated polarities, or {\em RS-polarities}, have been characterized as duals to perfect lattices, by rephrasing in a model-theoretic way the duality for perfect lattices given in \cite{DGP05}. In what follows, we will recall what it means for a polarity to be reduced and separated, and briefly explain how these two properties guarantee the perfection of the dual lattice. First, the route from perfect lattices to polarities is given by the following definition:

\begin{definition}
 For every perfect lattice $\mathbb{L}$, the {\em polarity associated with $\mathbb{L}$} is  the triple $\mathbb{L}_+ := (J^\infty (\mathbb{L}),M^\infty(\mathbb{L}),\perp_+)$ where $\perp_{+}$ is  the lattice order $\leq_{\mathbb{L}}$ restricted to $J^\infty (\mathbb{L}) \times M^\infty(\mathbb{L})$.
\end{definition}

\begin{definition}
(cf.\ \cite[Definitions 2.3 and 2.12]{Gehrke}) A polarity $\mathbb{P} = (A, X, \perp)$ is:
\begin{enumerate}
\item {\em separating} if the following conditions are satisfied:
\begin{enumerate}
\item[(s1)] for all $a, b\in A$, if $a\neq b$ then $\up{a}\neq \up{b}$, and
\item[(s2)] for all $x, y\in Y$, if $x\neq y$ then $\down{x}\neq \down{y}$.
\end{enumerate}
\item {\em reduced} if the following conditions are satisfied:
\begin{enumerate}
\item[(r1)] for every $a\in A$, some $x\in X$ exists s.t.\ $a$ is $\leq$-minimal in $\{b\in A\mid b \not\perp x \}$.
\item[(r2)] for every $x\in X$, some $a\in A$ exists s.t.\ $x$ is $\leq$-maximal in $\{y\in X\mid x\not \perp a\}$.
\end{enumerate}
\item an {\em RS-polarity}\footnote{In \cite{Gehrke}, RS-polarities are referred to as RS-frames. Here we reserve the term RS-frame for RS-polarities endowed with extra relations used to interpret the operations of the lattice expansion. } if it is separating and reduced.
\end{enumerate}
\end{definition}
If $\mathbb{P}$ is separating, then, denoting $S: = \{b\mid b\in A$ and $b<a\} = a{\downarrow} \setminus \{ a \}$ for each $a\in A$, notice that  $a{\downarrow}$ is completely join-irreducible in $\mathbb{P}^+$ iff $\bigvee_{b\in S}b{\downarrow}\varsubsetneq a{\downarrow}$ iff $\up{a}\varsubsetneq\bigcap_{b\in S}\up{b}$, i.e.\ some $x\in X$ exists such that  $b\perp x$ for all $b\in S$ and $a\not\perp x$, which is condition (r1). Similarly, (r2) dually characterizes the condition that, for every $x\in X$, the subset $x{\uparrow}$ is completely meet-irreducible in $\mathbb{P}^+$, represented as a sub meet-semilattice of $\P(X)$.
\begin{proposition}
(cf.\ \cite[Remark 2.13]{Gehrke} and \cite[Proposition 4.7, Corollary 4.9]{DGP05})
For every perfect lattice $\mathbb{L}$ and RS-polarity  $\mathbb{P}$,
    \begin{enumerate}
    \item $\mathbb{L}_+$ is an RS-polarity and $(\mathbb{L}_+)^{+} \cong \mathbb{L}$.
    \item $\mathbb{P}^+$ is a perfect lattice and $(\mathbb{P}^+)_{+} \cong \mathbb{P}$.
    \end{enumerate}
\end{proposition}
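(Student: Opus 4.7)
The plan is to prove the two parts in parallel, exploiting the symmetry between the join side and the meet side, so I will only spell out the join-side computations in each case. Throughout, the key technical tool is the characterisation $\du{z} = z{\uparrow}$ and $\ud{b} = b{\downarrow}$ from the corollary above, together with the fact that $\leq\circ\perp\circ\leq\ \subseteq\ \perp$ forces every Galois-stable subset of $A$ to be down-closed under the specialisation order.

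For part 1, I would first verify that $\mathbb{L}_+$ is separating: given distinct $j,k\in J^{\infty}(\mathbb{L})$ with, say, $j\not\leq k$, perfection of $\mathbb{L}$ (meet-generation by $M^{\infty}(\mathbb{L})$) yields some $m\in M^{\infty}(\mathbb{L})$ with $k\leq m$ and $j\not\leq m$, so $m\in\up{k}\setminus\up{j}$; (s2) is dual. For (r1), fix $j\in J^{\infty}(\mathbb{L})$ and set $j^{-} := \bigvee\{u\in\mathbb{L}\mid u<j\}$; complete join-irreducibility of $j$ gives $j^{-}<j$, and perfection supplies $m\in M^{\infty}(\mathbb{L})$ with $j^{-}\leq m$ but $j\not\leq m$. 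Any $k\in J^{\infty}(\mathbb{L})$ with $k\not\perp_{+}m$ must then satisfy $k\not\leq j^{-}$, hence $k\not<j$, making $j$ minimal in $\{k\in J^{\infty}(\mathbb{L})\mid k\not\perp_{+}m\}$; (r2) is dual. The isomorphism $(\mathbb{L}_+)^{+}\cong\mathbb{L}$ is then realised by $\eta\colon u\mapsto\{j\in J^{\infty}(\mathbb{L})\mid j\leq u\}$: one checks $\eta(u)^{\uparrow} = \{m\in M^{\infty}(\mathbb{L})\mid u\leq m\}$ using $u=\bigvee\eta(u)$, and then $\eta(u)^{\uparrow\downarrow}=\eta(u)$ using $u=\bigwedge\{m\in M^{\infty}(\mathbb{L})\mid u\leq m\}$, so $\eta$ lands in $(\mathbb{L}_+)^{+}$; perfection gives injectivity, and surjectivity follows by sending any Galois-stable $U$ to $\bigvee U$ and comparing.

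For part 2, the corollary already shows each $a{\downarrow}$ is Galois-stable. To see $a{\downarrow}$ is completely join-irreducible in $\mathbb{P}^+$, suppose $a{\downarrow}=\bigvee_{i}V_{i}$ and let $x\in X$ be the witness supplied by (r1). Since $a\in a{\downarrow}$ and $a\not\perp x$, the join on the right (i.e.\ the closure of $\bigcup_{i}V_{i}$) must contain some $b\in V_{i}$ with $b\not\perp x$; $\leq$-minimality of $a$ then forces $a\leq b$, giving $a{\downarrow}\subseteq b{\downarrow}\subseteq V_{i}$. Because every Galois-stable $U$ is down-closed in the specialisation order, $U=\bigcup_{a\in U}a{\downarrow}$, so the $a{\downarrow}$'s join-generate $\mathbb{P}^+$; meet-generation by the $x{\uparrow}$'s is dual, using (r2). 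This proves $\mathbb{P}^+$ is perfect. For the isomorphism $(\mathbb{P}^+)_{+}\cong\mathbb{P}$, the natural assignments $a\mapsto a{\downarrow}$ and $x\mapsto x{\uparrow}$ are injective by (s1), (s2), and the equivalence $a\perp x\ \Leftrightarrow\ a\in\down{x}\ \Leftrightarrow\ a{\downarrow}\subseteq\down{x}$ identifies $\perp$ with $\leq_{\mathbb{P}^+}$ restricted to $J^{\infty}\times M^{\infty}$.

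The main obstacle I expect is the surjectivity of $a\mapsto a{\downarrow}$ onto $J^{\infty}(\mathbb{P}^+)$ (and its dual): the computation above only shows each $a{\downarrow}$ is completely join-irreducible, not that every completely join-irreducible element of $\mathbb{P}^+$ has this form. I would close the gap as follows: if $U\in J^{\infty}(\mathbb{P}^+)$, then $U=\bigvee_{a\in U}a{\downarrow}$ forces $U=a{\downarrow}$ for some $a\in U$ by complete join-irreducibility, and (s1) guarantees that the $a$ realising $U$ is unique. Everything else in the proof is routine bookkeeping that unfolds the Galois connection and the specialisation order.
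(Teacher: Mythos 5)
Your argument is correct and follows essentially the same route the paper relies on: the paper itself does not spell out a proof (it cites Gehrke and \cite{DGP05}), but its preceding remark --- that under separation the reducedness conditions (r1)/(r2) exactly characterize complete join-/meet-irreducibility of the principal stable sets $a{\downarrow}$ and $x{\uparrow}$ --- is precisely the core of your Part~2, and your Part~1 is the standard Birkhoff-style computation with $\eta(u)=\{j\in J^{\infty}(\mathbb{L})\mid j\leq u\}$ used in those references. The ``obstacle'' you flag (surjectivity of $a\mapsto a{\downarrow}$ onto $J^{\infty}(\mathbb{P}^+)$) is resolved correctly by your join-generation plus irreducibility argument, so no gap remains.
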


\subsection{RS-frames and models}
\label{ssec:RS-frames and models}
In the present section, we  report on the definition of a relational semantics, based on RS-polarities, for an expansion $\mathcal{L}$  of the basic lattice language  with a unary normal box-type connective. We  also give semantics for a further expansion of $\mathcal{L}$ with a unary normal diamond-type connective $\Diamondblack$, and with two special sorts of variables $\nomi,\nomj$ called {\em nominals}, and $\cnomm, \cnomn$ called {\em co-nominals}. This semantics is the outcome of a dual characterization  which is discussed in detail and in full generality in  \cite[Section 2]{ConPal13}, and is reported on in the appendix for the part directly relevant to this paper. The most peculiar feature of this semantics is that formulas are {\em satisfied} at $a\in A$ and {\em co-satisfied} ({\em refuted}) at   $x\in X$. 

\begin{definition}\label{Def:RSFrame}
An \textit{RS-frame} for $\mathcal{L}$ is a structure $\mathbb{F} = (\mathbb{P},R)$ where $\mathbb{P} = (A,X,\perp)$ is an RS-polarity, and $R\subseteq A\times X$ such that  the images and pre-images of singletons under $R$ are Galois-closed, i.e.\ for every $x\in X$ and $a\in A$,
\begin{center}
$\ud{R^{-1}[x]}\subseteq R^{-1}[x]$ and  $\du{R[a]}\subseteq R[a]$.
\end{center} 
Relations $R$ which satisfy this condition are called {\em RS-compatible}.
\end{definition}
The additional conditions on $R$ are compatibility conditions guaranteeing that the following assignments respectively define the  operations  $\Box$  and  $\Diamondblack$  associated with $R$  on the lattice $\mathbb{P}^+$: for every $U\in \mathbb{P}^+$,
\begin{center}
$\Box U: = \bigcap \{R^{-1}[x]\mid U\subseteq \down{x}\}\ $ and $\ \Diamondblack U: = \bigvee \{R[a]\mid \ud{a}\subseteq U\}.$
\end{center}
\begin{definition}\label{Def:complex algebra of RS frame}
For every RS-frame $\mathbb{F} = (\mathbb{P},R)$, its \textit{complex algebra} is the lattice expansion  $\mathbb{F}^+: = (\mathbb{P}^+, \Box)$ where $\Box$ is defined as above.
\end{definition}

\begin{lemma}\label{lemma: R circ leq subseteq R}
$\leq\ \circ\ R\ \circ \leq \ \subseteq \ R$ for every RS-frame $\mathbb{F} = (\mathbb{P},R)$.
\end{lemma}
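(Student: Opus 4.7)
The goal is to show that if $a \leq a'$ in $A$, $a' R x'$, and $x' \leq x$ in $X$, then $a R x$. My plan is to factor the argument through the two compatibility conditions on $R$ and to use the characterization of Galois-closures of singletons given by the corollary just before this subsection, namely $\ud{b} = b{\downarrow}$ and $\du{z} = z{\uparrow}$.

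First, I would record the following general fact about the specialization pre-orders: every Galois-stable subset $C\subseteq A$ is downward closed with respect to $\leq$, and every Galois-stable subset $D\subseteq X$ is upward closed with respect to $\leq$. For the first: if $b\in C$ and $a\leq b$, then $\{b\}\subseteq C$ yields $\ud{b}\subseteq \ud{C}=C$; by the corollary $\ud{b}=b{\downarrow}$, and $a\leq b$ means $a\in b{\downarrow}$, hence $a\in C$. The argument for $D$ is order-dual, using $\du{y} = y{\uparrow}$: if $y\in D$ and $y\leq z$ in $X$, then $y{\uparrow}\subseteq \du{D}=D$, so $z\in y{\uparrow}\subseteq D$.

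Second, I would apply this general fact to the sets $R^{-1}[x']$ and $R[a]$. By the RS-compatibility of $R$, $R^{-1}[x']$ is Galois-stable in $A$ and $R[a]$ is Galois-stable in $X$. From $a' R x'$ we get $a' \in R^{-1}[x']$; the downward closure of $R^{-1}[x']$ together with $a\leq a'$ then yields $a R x'$, i.e.\ $x'\in R[a]$. The upward closure of $R[a]$ together with $x'\leq x$ yields $x\in R[a]$, that is, $a R x$, as required.

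The proof is essentially a diagram-chase and I do not expect any serious obstacle; the only subtle point is getting the directions of the specialization orders on $A$ and $X$ straight, which is why I explicitly identify the Galois-closures of singletons with $b{\downarrow}$ and $z{\uparrow}$ before applying them. This turns the lemma into a direct analogue of the earlier observation that $\leq\circ\perp\circ\leq\ \subseteq\ \perp$, with the compatibility conditions on $R$ playing the role that the definition of the specialization orders plays for $\perp$.
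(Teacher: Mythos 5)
Your proof is correct. The facts you rely on all check out: RS-compatibility together with extensivity of the closure operators does make $R^{-1}[x']$ Galois-stable in $A$ and $R[a]$ Galois-stable in $X$; your auxiliary claim that Galois-stable subsets of $A$ are downward closed and Galois-stable subsets of $X$ are upward closed under the specialization pre-orders is right; and deriving it from $\ud{b}=b{\downarrow}$, $\du{y}=y{\uparrow}$ plus monotonicity of the closures is sound. The paper takes a slightly different route: it fixes $aRz$ and $z\leq y$, reduces $y\in R[a]$ to $y\in \du{R[a]}$ via the second compatibility condition, and then verifies this membership by hand, picking $b\in \down{R[a]}$ and invoking $\perp\ \circ\ \leq\ \subseteq\ \perp$, leaving the other half of the composition as ``similar''. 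So where the paper re-unwinds the Galois closure at the point of use, you factor the same content through the already-proved corollary on singleton closures, obtaining a reusable order-theoretic fact (stable sets are down-/up-sets for the specialization orders) and treating both halves of $\leq\ \circ\ R\ \circ\ \leq\ \subseteq\ R$ explicitly. The two arguments are mathematically equivalent; yours is a little more modular, the paper's a little more self-contained.
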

%
An \textit{RS-model} for $\mathcal{L}$ on $\mathbb{F}$ is a structure $\mathbb{M} = (\mathbb{F}, v)$ such that $\mathbb{F}$ is an RS-frame for $\mathcal{L}$ and $v$ is a variable assignment mapping each $p\in \mathsf{PROP}$ to a pair $(V_1(p), V_2(p))$ of Galois-stable sets in $\P(A)$ and $\P(X)$ respectively. In a model for the expanded language with $\Diamondblack$, nominals and conominals, variable assignments also map nominals $\nomj$ to  $(\ud{j}, \up{j})$ for some $j$ in $A$ and co-nominals $\cnomm$ to $(\down{m}, \du{m})$ for some $m$ in $X$.

The following table reports the recursive definition of the satisfaction and co-satisfaction relations on $\mathbb{M}$:

\begin{center}
{\footnotesize
\begin{tabular}{l}
\begin{tabular}{lllllll}
$\mathbb{M}, a \Vdash 0$ && never & &$\mathbb{M}, x \succ 0$ && always\\
$\mathbb{M}, a \Vdash 1$ &&always & &$\mathbb{M}, x \succ 1$ &&never\\
$\mathbb{M}, a \Vdash p$ & iff & $a\in V_1(p)$ & &$\mathbb{M}, x \succ p$ & iff & $x\in V_2(p)$\\
$\mathbb{M}, a \Vdash \nomi$ & iff & $a\in V_1(\nomi)$ & &$\mathbb{M}, x \succ \nomi$ & iff & $x\in V_2(\nomi)$\\
$\mathbb{M}, a \Vdash \cnomm$ & iff & $a\in V_1(\cnomm)$ & &$\mathbb{M}, x \succ \cnomm$ & iff & $x\in V_2(\cnomm)$\\
\end{tabular}
\\
\begin{tabular}{llll}
$\mathbb{M}, a \Vdash \phi\wedge \psi$ & iff & $\mathbb{M}, a \Vdash \phi$ and $\mathbb{M}, a \Vdash  \psi$ & \\
$\mathbb{M}, x \succ \phi\wedge \psi$ & iff & for all $a\in A$, if $\mathbb{M}, a \Vdash \phi\wedge \psi$, then $a \perp x$\\
$\mathbb{M}, a \Vdash \phi\vee \psi$ & iff & for all $x\in X$, if $\mathbb{M}, x \succ \phi\vee \psi$, then $a \perp x$  & \\
$\mathbb{M}, x \succ \phi\vee \psi$ & iff &  $\mathbb{M}, x \succ \phi$ and $\mathbb{M}, x \succ  \psi$ &\\
$\mathbb{M}, a \Vdash \Box\phi$ & iff & for all $x\in X$, if $\mathbb{M}, x \succ \phi$, then $a R x$& \\
$\mathbb{M}, x \succ \Box\phi$ & iff & for all $a\in A$, if $\mathbb{M}, a \Vdash \Box\phi$, then $a \perp x$\\
$\mathbb{M}, a \Vdash \Diamondblack\phi$ & iff & for all $x\in X$, if $\mathbb{M}, x \succ \Diamondblack\phi$, then $a \perp x$   &\\
$\mathbb{M}, x \succ \Diamondblack\phi$ & iff &  for all $a\in A$, if $\mathbb{M}, a \Vdash \phi$, then $a R x$. \\
\end{tabular}
\end{tabular}
}
\end{center}
%
The following lemma is proven easily by simultaneous induction on $\phi$ and $\psi$ using the truth definitions above. The base cases for $0$ and $1$ use conditions (r1) and (r2) and those for proposition letters, nominals and co-nominals follow from the way valuations are defined.

\begin{lemma}\label{lemma:Vdash:Vs:Succ}
For all formulas $\phi$ and $\psi$ it holds that
\begin{enumerate}
\item $\mathbb{M}, a \Vdash \phi$ iff for all $x \in X$, if $\mathbb{M}, x \succ \phi$ then $a \perp x$, and
\item $\mathbb{M}, x \succ \psi$ iff for all $a \in A$, if $\mathbb{M}, a \Vdash \psi$ then $a \perp x$.
\end{enumerate}
\end{lemma}
An inequality $\phi \leq \psi$ is \emph{true} in $\mathbb{M}$, denoted $\mathbb{M} \Vdash \phi \leq \psi$, if for all $a \in A$ and all $x \in X$, if $\mathbb{M}, a \Vdash \phi$ and $\mathbb{M}, x \succ \psi$ then $a \perp x$.

\begin{remark}\label{remark:equiv:version:truth}
It follows from Lemma \ref{lemma:Vdash:Vs:Succ} that $\mathbb{M} \Vdash \phi \leq \psi$ iff for all $a \in A$, if $\mathbb{M}, a \Vdash \phi$ then $\mathbb{M}, a \Vdash \psi$. It also follows that $\mathbb{M} \Vdash \phi \leq \psi$ iff for all $x \in X$, if $\mathbb{M}, x \succ \psi$ then $\mathbb{M}, x \succ \phi$. We will find these equivalent characterizations of truth in a model useful when treating examples.
\end{remark}

\subsection{Standard translation on RS-frames}

As in the Boolean case, each RS-model $\mathbb{M}$ for $\mathcal{L}$ can be seen as a  first-order structure, albeit two-sorted. Accordingly, we define correspondence languages as follows.

Let $L_1$ be the two-sorted first-order language with equality  built over the denumerable and disjoint sets of individual variables $\mathsf{A}$ and $\mathsf{X}$, with binary relation symbol $\perp$,  $R$,  and two unary predicate symbols $P_1, P_2$ for each  $p \in \mathsf{PROP}$.\footnote{The intended interpretation links $P_1$ and $P_2$ in the way suggested by the definition of $\mathcal{L}$-valuations. Indeed, every $p \in \mathsf{PROP}$ is mapped to a pair $(V_1(p), V_2(p))$ of Galois-stable sets as indicated in Subsection \ref{ssec:RS-frames and models}. Accordingly, the interpretation of pairs $(P_1, P_2)$ of predicate symbols is restricted to such pairs of Galois-stable sets, and hence the interpretation of universal second-order quantification is also restricted to range over such sets.}

We will further assume that $L_1$ contains denumerably many individual variables $i, j, \ldots$ corresponding to the nominals $\nomi, \nomj, \ldots \in \mathsf{NOM}$ and $n, m, \ldots$ corresponding to the co-nominals $\cnomn, \cnomm \in \mathsf{CO\text{-}NOM}$. Let $L_0$ be the sub-language which does not contain the unary predicate symbols corresponding to the propositional variables.
Let us now define the \emph{standard translation} of $\mathcal{L}^+$ into $L_1$ recursively:\footnote{Recall that $a\leq j$ abbreviates $\forall x(j\perp x\rightarrow a\perp x)$ and $m \leq x$ abbreviates $\forall a(a\perp m\rightarrow a\perp x)$.}
\begin{center}
{\footnotesize
\begin{tabular}{ll}
$\mathrm{ST}_a(0) := a \neq a$ & $\mathrm{ST}_x(0) := x  = x$ \\
$\mathrm{ST}_a(1) := a = a$ & $\mathrm{ST}_x(1) := x \neq x$\\
$\mathrm{ST}_a(p) := P_1(a)$& $\mathrm{ST}_x(p) :=  P_2(x)$\\
$\mathrm{ST}_a(\nomj) := a\leq j$& $\mathrm{ST}_x(\nomj) := j \perp x$  \\
$\mathrm{ST}_a(\cnomm) := a \perp m$ & $\mathrm{ST}_x(\cnomm) := m \leq x$\\
$\mathrm{ST}_a(\phi \vee \psi) := \forall x[\mathrm{ST}_x(\phi\vee \psi)\rightarrow a\perp x]$& $\mathrm{ST}_x(\phi \vee \psi) := \mathrm{ST}_x(\phi) \wedge \mathrm{ST}_x(\psi)$\\
$\mathrm{ST}_a(\phi \wedge \psi) := \mathrm{ST}_a(\phi) \wedge \mathrm{ST}_a(\psi)$& $\mathrm{ST}_x(\phi \wedge \psi) := \forall a[\mathrm{ST}_a(\phi\wedge \psi)\rightarrow a\perp x]$\\
 $\mathrm{ST}_a(\Box \phi) := \forall x [\mathrm{ST}_x(\phi)  \rightarrow aR x]$ & $\mathrm{ST}_x(\Box \phi) := \forall a [\mathrm{ST}_a(\Box \phi)\rightarrow a\perp x]$ \\
$\mathrm{ST}_a(\Diamondblack \phi) := \forall x [\mathrm{ST}_{x}(\Diamondblack \phi)\rightarrow a \perp x]$& $\mathrm{ST}_x(\Diamondblack \phi) := \forall a[\mathrm{ST}_a(\phi)\rightarrow aR x]$\\
\end{tabular}
}
\end{center}
The following is a variant of \cite[Lemma 2.5]{ConPal13}.
\begin{lemma}\label{lemma:Standard:Translation}
For any $\mathcal{L}$-model $\mathbb{M}$  and any $\mathcal{L}^+$-inequality $\phi\leq\psi$, it holds that
%
$\mathbb{M} \Vdash \phi \leq \psi$ \quad iff \quad $\mathbb{M} \models \forall a \forall x [\mathrm{ST}_a(\phi) \wedge \mathrm{ST}_x(\psi) \rightarrow  a \perp x]$
\quad iff \quad $\mathbb{M} \models \forall a [\mathrm{ST}_a(\phi) \rightarrow  \mathrm{ST}_a(\psi)]$
\quad iff \quad $\mathbb{M} \models \forall x [\mathrm{ST}_x(\psi) \rightarrow  \mathrm{ST}_x(\phi)]$.
\end{lemma}

\subsection{Examples}\label{ssec:examples}
 So far we have seen that the environment of RS-frames provides a mathematically motivated generalization of the correspondence theory which was key to the success of classical normal modal logic as a formal framework in multiple settings. The focus of this paper is to try and understand whether and how this generalized environment can retain some of the intuition which made Kripke semantics and modal logic so appealing. Let us start with the inequality $\Box 0\leq 0$, which corresponds on Kripke frames to the condition that every state has a successor.
\begin{center}
\begin{tabular}{r c l}
$ \Box 0 \leq 0$
&iff &$ \forall a [\mathrm{ST}_{a}(\Box 0) \rightarrow\forall x (\mathrm{ST}_{x}(0) \rightarrow a \perp x)]$\\
&iff &$ \forall a [\forall y(y = y \rightarrow a R y) \rightarrow\forall x (x = x \rightarrow a \perp x)]$\\
&iff &$ \forall a [\forall y(a R y) \rightarrow\forall x (a \perp x)]$\\
&iff & $\forall a\exists y (\neg(aRy))$.\\
\end{tabular}
\end{center}
To justify the last equivalence, notice that by definition, in RS-polarities no object $a$ verifies $\forall x (a \perp x)$. Hence the condition in the penultimate line is true precisely when  the premise of the implication is false. This condition says  that every state is not $R$-related to some co-state; the  condition on Kripke frames is recognizable modulo suitable insertion of  negations. 
Next, let us consider the inequality $\Box p\leq p$, which corresponds on Kripke frames to the condition that $R$ is reflexive.
\begin{center}
\begin{tabular}{r c l ll }
$ \forall p (\Box p \leq p)$
&iff &$ \forall \cnomm (\Box \cnomm \leq \cnomm)$\\
&iff &$ \forall a \forall \cnomm[\mathrm{ST}_{a}(\Box \cnomm) \rightarrow\mathrm{ST}_{a}(\cnomm)]$\\
&iff &$ \forall a \forall m(a R m \rightarrow a \perp m)$, & \\
\end{tabular}
\end{center}
 since by definition, $\mathrm{ST}_{a}(\cnomm) = a \perp m$, and  $\mathrm{ST}_{a}(\Box \cnomm)= \forall y(m\leq  y \rightarrow a R y)$ can be rewritten as  $m{\uparrow}\subseteq R[a]$, which is equivalent to $a R m$, since $R\ \circ \leq \ \subseteq R$ (cf.\ Lemma \ref{lemma: R circ leq subseteq R}).
To recognize the connection with the usual reflexivity condition, observe that $ \forall a \forall m(a R m \rightarrow a \perp m)$ is equivalent to $ R\subseteq \ \perp$, and the reflexivity of a relation $R\subseteq A\times A$ can be written as $\text{Id}\subseteq R$, which is equivalent to $R^c\subseteq {\text{Id}}^c$.

Clearly, $\Box p \leq p$ implies $\Box\Box p \leq \Box p$. Let us consider the converse inequality, which in the classical setting corresponds to transitivity:
\begin{center}
\begin{tabular}{r c l ll}
$ \forall p (\Box p \leq \Box\Box p)$
&iff &$ \forall \cnomm (\Box \cnomm \leq \Box\Box \cnomm)$\\
&iff &$ \forall a \forall \cnomm(\mathrm{ST}_{a}(\Box \cnomm) \rightarrow\mathrm{ST}_{a}(\Box\Box \cnomm) )$\\
&iff & $ \forall a \forall m(a Rm \rightarrow R^{-1}[m]^{\uparrow}\subseteq R[a]),$\\
\end{tabular}
\end{center}
where
\begin{center}
\begin{tabular}{c l ll}
$\mathrm{ST}_{a}(\Box\Box \cnomm)$& $  = $ & $\forall y[\mathrm{ST}_{y}(\Box \cnomm) \rightarrow a R y]$\\
&$ = $ & $\forall y[\forall b(\mathrm{ST}_{b}(\Box \cnomm)\rightarrow b\perp y) \rightarrow a R y]$\\
&$ = $ & $\forall y[\forall b(bRm \rightarrow b\perp y) \rightarrow a R y]$ & ($\ast\ast$)\\
&$ = $ & $R^{-1}[m]^{\uparrow}\subseteq R[a]$.
\end{tabular}
\end{center}

While, again, with a bit of work it is possible to retrieve the transitivity condition in this new interpretation, already with a relatively simple inequality such as $\Box p \leq \Box\Box p$ this game  is not really useful for the purpose of gaining a better intuitive understanding of this semantics, since it requires jumping through too many hoops (the accessibility relation on states is here encoded into a `non unaccessibility' relation between states and co-states), and quickly becomes awkward and unintuitive. In the next section, we will argue that better results can be achieved by taking it as primitive, rather than as the generalization of some other semantics. 

\section{Conceptualizing RS-semantics via categorization theory}
\label{sec:categorization}
In the present section, we  propose a conceptualization of the notions introduced in the previous section based on ideas from  categorization theory in management science.
The starting point of this conceptualization is the very well known idea, core to Formal Concept Analysis, that polarities $(A, X, \perp)$ are  abstract representations of databases, in which $A$ and $X$ are sets of {\em objects} and {\em properties} respectively, and $\perp$ encodes information about whether a given object satisfies a given property.
More specifically, we propose to think of a given polarity $\mathbb{P} = (A, X, \perp)$ as a database such that $A$ is the set of all  {\em products} in a given market at a certain moment (e.g.\ all models of cars, or models of togas on sale in the Netherlands in a given year),  and $X$ all the relevant observable {\em features} of these products. 
The specialization pre-order $a\leq b$ on objects ($a$ has at least all the features that $b$ has)  can then be read as `product $a$ is at least as specified (i.e.\ rich in features) as product $b$' and the one on features $x\leq y$ (any  product having $x$ has also $y$) as `feature $y$ is more generic than feature $x$'.
The RS-conditions on the database can then be understood as follows:
\begin{itemize}
\item[](s1): Any two distinct products can be told apart by some  feature;

\item[](s2): For any two distinct features there is a product having one but not the other;

\item[](r1): For any product $a$, if there are strictly more specified products than $a$ in the market, then they all share some feature $x$ which $a$ does not have;

\item[](r2): For any feature $x$, if there are strictly more generic features than $x$, then some product $a$ exists which has all of them but not $x$.
\end{itemize}
The separation conditions (s1) and (s2) seem rather intuitive and do not require much explanation; (r1) can be enforced by suitably adding `artificial' features to the database, and (r2) can be enforced by  removing features from the database which are the exact intersection of two or more generic features.\footnote{For instance, consider the following features of a soft drink: $x$: = `with vitamin A', $y$: = `with vitamin C', $z$: = with vitamin A and C'. Clearly, a database with these features would violate (r2). This can be remedied by removing $z$ from the set $X$ of  the database.} Clearly, removing such features can always be done without loss of descriptive power. We can always enforce the separation and reduction conditions, since the finite polarities we consider are a subclass of the so-called doubly founded polarities, for which this is always possible, see \cite{ganter1997applied}. 
%


Arguably, the reformulation of the RS-conditions in terms of products and features makes them easier to grasp. 

Further, we propose to understand the lattice $\mathbb{P}^+$ as the collection of `candidate categories'. That is, each element of $\mathbb{P}^+$ is a set of products which is completely identified by the set of features  common to its elements. That is,  any product with all these features is a member of the `candidate category'. We refer to these categories as `candidate' since they are purely implicit in the database, and not necessarily the target of any  social construction. In particular, only a restricted subset of candidate categories will support the interpretation of socially meaningful categories (which have {\em labels} such as western, opera,  bossa-nova, SUV, smart phone etc.).
Labels of socially meaningful categories can be assigned to `candidate categories' in the usual way, namely, by means of an assignment $v$  which associates each {\em atomic category label} $p\in \mathsf{PROP}$ to a category viewed both extensionally as $V_1(p)\subseteq X$ and intensionally as $V_2(p)\subseteq A$.\footnote{ Recall that for such an assignment, $V_1(p) = \down{V_2(p)}$ and  $V_2(p) = \up{V_1(p)}$.} Notice the perfect match between the encoding of the {\em meaning} of atomic propositions  on Kripke models and  of atomic category labels on RS-models: the meaning of  atomic proposition $p$ is given as the set of states at which $p$ holds true;  the meaning of atomic category  label $p$ is given as the set of products which are the {\em members} of $p$, and the set of features which {\em describe} $p$. In what follows, we will refer to the intension of a category (cf.\ Footnote \ref{footnote:intension}) as its {\em description}, and we say that a feature {\em describes} a category if it belongs to its description.

Given such an assignment,\footnote{Empirically, there are many ways to generate such an assignment \cite{paleo2006classification}.} the database is endowed with a structure of an $\mathcal{L}$-model $\mathbb{M}$, in such a way that, for every formula (category label) $\phi\in \mathcal{L}$, any $a\in A$ and $x\in X$, the symbols $\mathbb{M}, a \Vdash \phi$ and $\mathbb{M}, x \succ \phi$ can be understood as `object $a$ is a member of category $\phi$', and `feature $x$ describes category $\phi$'. One immediately apparent advantage of this conceptualization is that it provides an intuitive way to understand $\succ$ from first principles rather than  as the negative counterpart of $\Vdash$.

The other advantage concerns the understanding of the connectives $\wedge$ and $\vee$ in the general lattice environment. The issue is that their standard interpretation as conjunction and disjunction does not seem completely right, since distributivity seems hardwired in the way we understand `and' and `or' in natural language.
The satisfaction clauses for $\wedge$ and $\vee$ formulas read:
\begin{center}
\begin{tabular}{llll}
$\mathbb{M}, a \Vdash \phi\wedge \psi$ & iff & $\mathbb{M}, a \Vdash \phi$ and $\mathbb{M}, a \Vdash  \psi$ & \\
$\mathbb{M}, x \succ \phi\wedge \psi$ & iff & for all $a\in A$, if $\mathbb{M}, a \Vdash \phi\wedge \psi$, then $a \perp x$\\
$\mathbb{M}, a \Vdash \phi\vee \psi$ & iff & for all $x\in X$, if $\mathbb{M}, x \succ \phi\vee \psi$, then $a \perp x$  & \\
$\mathbb{M}, x \succ \phi\vee \psi$ & iff &  $\mathbb{M}, x \succ \phi$ and $\mathbb{M}, x \succ  \psi$ &\\
\end{tabular}
\end{center}
These clauses say that the category $\phi\wedge \psi$ is the one whose members are  members of both categories $\phi$ and $\psi$; hence, these products will satisfy at least both the description of $\phi$ and of $\psi$, and hence the description of $\phi\wedge \psi$ contains at least the union of these descriptions.  The category $\phi\vee \psi$ is described by the intersection of the descriptions of $\phi$ and of $\psi$. Hence, membership in $\phi\vee \psi$ only requires products to satisfy this smaller set of features, and  typically includes much more than the union of the members of the two categories. So for instance, $\mathsf{bird}\vee\mathsf{cat}$ would exclude reptiles, insects and fish, but include vertebrate homeothermic species such as the platypus. This interpretation of $\wedge$ and $\vee$ makes it possible to understand intuitively why distributivity fails. Indeed,  a member of $(\mathsf{phone}\vee \mathsf{smartphone}) \wedge (\mathsf{kettle}\vee \mathsf{smartphone})$ is guaranteed to have all the features in the description of $\mathsf{phone}$ (and in fact, $\mathsf{kettle}\vee \mathsf{smartphone}$ is so general that can be assumed to not add any feature that $\mathsf{phone}$ does not have already). However, this might be not enough for it to be a member  of  $(\mathsf{phone}\wedge \mathsf{kettle})\vee \mathsf{smartphone}$, given that the category $\mathsf{phone}\wedge \mathsf{kettle}$ has no members (hence its description consists of {\em all}  features), and so the members of $(\mathsf{phone}\wedge \mathsf{kettle})\vee \mathsf{smartphone}$ must have at least all the features in the description of $\mathsf{smartphone}$.

Now that we have a working understanding of $\Vdash$ and $\succ$, we can recognize the normal box-type operator on $\mathbb{P}^+$ as the perspective of a single  agent on  categories. Accordingly, $\mathbb{M}, a \Vdash \Box\phi$ and $\mathbb{M}, x \succ \Box\phi$ can be understood as `object $a$ is a member of category $\phi$ according to the agent', and `feature $x$ describes category $\phi$ according to the agent'. The normality conditions $\Box \top = \top$ and $\Box (\phi\wedge \psi) = \Box\phi\wedge \Box\psi$ can be understood as rationality requirements: that is, the agent correctly recognizes the `uninformative' category $\top$ as such, and her understanding/perception of the greatest common subcategory of any two categories $\phi$ and $\psi$ is the greatest common subcategory of the categories she understands as $\phi$ and $\psi$.

On the side of the database, the agent is modelled as a relation $R\subseteq A\times X$. Hence, $aRx$ intuitively reads `object $a$ has feature $x$ according to the agent'. Unsurprisingly, the additional  properties of $R$ (cf.\ Lemma \ref{lemma: R circ leq subseteq R}) can be also understood as rationality requirements: if $aRx$ then $aRy$ for every $y\geq x$ says that if the agent attributes feature $x$ to product $a$, then the agent will attribute to $a$ also all the features which are `implied' by $x$. Likewise,  if $aRx$ then $bRx$ for every $b\leq a$ says that if the agent attributes feature $x$ to product $a$, then the agent will attribute $x$ also to all the products which are `more specified' than $a$. 

Like in the classical case, two modal operators, $\Box$ and $\Diamondblack$, are associated with the same relation $R$. However, these operations are not dual to each other, in the sense of e.g.\  $\Diamond: = \neg\Box\neg$, but are rather adjoints to each other, that is, for all $u, v\in \mathbb{P}^+$,
\begin{center}
$\Diamondblack u\leq v$ \ iff \ $u\leq \Box v$.
\end{center}
In fact, rather than encoding the dual perspective on the subjectivity of the agent that  $\Box$ encodes, the operation $\Diamondblack$ encodes the same perspective that $\Box$ encodes, only geared towards objects while $\Box$ is geared towards features.
Indeed, for every object $j$ and every feature $m$, denoting by $\nomj$ and $\cnomm$ the categories respectively generated by  $j$ and  $m$,
\begin{center}
$\Diamondblack \nomj\leq \cnomm$ \ iff \  $jRm$ \ iff \ $\nomj\leq \Box \cnomm$.
\end{center}
Thus, the information $jRm$ (`the agent attributes feature $m$ to object $j$) is encoded  on the side of the categories both by saying that $m$ describes the category $\Diamondblack \nomj$ (the one the  agent understands as the category generated by  $j$), and by saying that $j$ is a member of the category $\Box\cnomm$ (the one the  agent understands as the category generated by  $m$).
As to the defining clauses of the recursive definition of $\Vdash$ and $\succ$,
 by definition, $\mathbb{M}, a\Vdash \Box \phi$ is the case iff  for all features $x$, if $\mathbb{M}, x \succ \phi$, then $a R x$. That is,  product $a$ is  recognized by the agent as member of  category $\phi$ iff the agent attributes to $a$ all the features that belong to the description  of $\phi$.

Moreover, by definition, $\mathbb{M}, x \succ \Box\phi$  iff  for all $a\in A$, if $\mathbb{M}, a \Vdash \Box\phi$, then $a \perp x$. That is,  feature $x$ pertains to the description  of  category $\phi$ according to the agent iff $x$ is verified by each object $a$ that the agent recognizes as  a member of $\phi$.


Two modal axioms commonly considered in epistemic logic are `reflexivity' $\Box p\leq p$ and `transitivity' ($\Box p \leq \Box\Box p$). The axiom $\Box p\leq p$ is interpreted  epistemically as the {\em factivity} of knowledge (`if the agent knows that $p$ then $p$ is true'). The first-order correspondent of the factivity axiom on RS-frames is $\forall a \forall x(aRx\rightarrow a\perp x)$, which indeed expresses a form of factivity, in that it requires that whenever the agent attributes any feature $x$ to any product $a$, then it is indeed the case that $x$ is a feature of $a$.
The axiom $\Box p\leq \Box\Box  p$ is interpreted  epistemically as the {\em positive introspection} of knowledge (`if the agent knows that $p$, then the agent knows that she knows that $p$'). The first-order correspondent of the positive introspection axiom on RS-frames is  $\forall a \forall m(a Rm \rightarrow R^{-1}[m]^{\uparrow}\subseteq R[a])$, expressing the condition that if an agent attributes feature $m$ to product $a$, then she will attribute to $a$ all the features which are shared by the products to which she attributes $m$. 
To understand the link between this condition and positive introspection, consider the category $\Box \cnomm$, i.e.\ the category which the agent understands as the one generated by a given feature $m$.\footnote{In fact, the same argument would hold more in general for any category $\Box\phi$.} This category can be identified with the tuple $(R^{-1}[m], R^{-1}[m]^{\uparrow})$. That is, the members of $\Box \cnomm$ are the products to which the agent attributes $m$ (recall that $R^{-1}[m]$ is a Galois-stable set by Definition \ref{Def:RSFrame}) and the description of $\Box \cnomm$ is the set of the features which the products in $R^{-1}[m]$ have in common. By definition,  $b\perp z$ for every $b\in R^{-1}[m]$ and $z\in R^{-1}[m]^{\uparrow}$. The first-order correspondent of $\Box p \leq \Box\Box p$ requires that  $b R z$ for such $b$ and $z$.   So, while factivity corresponds to $R\subseteq \ \perp$, positive introspection gives the reverse inclusion restricted to products  and features pertaining to `boxed categories'. That is, the agent must be aware of the features of the products  of the categories that she knows.

\section{Categories as social constructs}
\label{sec:social-construction}
In the present section, we introduce a formal account of the emergence of categories as the outcome of a process of social interaction. We consider for the sake of simplicity a setting of two agents. Accordingly, we consider the bi-modal logic $\mathcal{L}$ which is the axiomatic extension of the basic normal LE-logic for two unary normal box-type modal operators, 1 and 2, with the axioms $ip\leq p$ and $ip\leq iip$ for $1\leq i\leq 2$. Models for this logic are structures $(\mathbb{P}, R_1, R_2, v)$ such that $\mathbb{P} = (X, A, \perp)$ is an RS-polarity, $R_i\subseteq A\times X$  for $1\leq i\leq 2$, such that the following conditions hold:
\begin{enumerate}
\item $\forall x(\ud{R_i^{-1}[x]}\subseteq R_i^{-1}[x])$;
\item  $\forall a(\du{R_i[a]}\subseteq R_i[a])$;
\item $ R_i\subseteq \ \perp$;
\item $\forall a \forall x(a R_i x \rightarrow R_i^{-1}[x]^{\uparrow}\subseteq R_i[a])$,
\end{enumerate}
and $v$ is an assignment which associates each $p\in \mathsf{PROP}$ to an element of $\mathbb{P}^+$ viewed both extensionally as $V_1(p)\subseteq A$ and intensionally as $V_2(p)\subseteq X$ in such a way that $V_1(p) = \down{V_2(p)}$ and  $V_2(p) = \up{V_1(p)}$.

In this setting, a common knowledge-type construction can be performed which yields an expansion, denoted  $\mathcal{L}_C$, of the bi-modal LE-logic above with a  normal box-type operator $C$, the interpretation of which on $\mathbb{P}^+$, given the additional axioms, is given as follows: for any $u\in \mathbb{P}^+$,
\begin{center}
$C(u): = \bigwedge_{s\in S}su,$
\end{center}
where $S$ is the set of all compound modalities of the forms  $(ij)^n$ and $i(ji)^n$,  for $1\leq i\neq j\leq 2$ and for some $n\in \mathbb{N}$.
\begin{lemma}
\label{lemma:properties of C}
 $C(u)\leq u$ and $C(u)\leq C(C(u))$ for any $u\in \mathbb{P}^+$.
\end{lemma}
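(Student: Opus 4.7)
The plan is to derive an absorption identity from the two axioms, use it to reduce every word over $\{1,2\}$ in the two modalities to an alternating word lying in $S$, and then exploit meet-preservation of the box-type operators on $\mathbb{P}^+$.

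First I would observe that, by combining factivity $iv\leq v$ (instantiated at $v := iu$) with positive introspection $iu\leq iiu$, one obtains $iiu = iu$ for each $i\in\{1,2\}$ and every $u\in\mathbb{P}^+$. Consequently, for any finite word $w$ over $\{1,2\}$, $wu = w'u$, where $w'$ is the alternating word obtained from $w$ by iteratively collapsing adjacent equal letters; and $w'$ belongs to $S$, since $S$ is precisely the collection of alternating words over $\{1,2\}$ of the prescribed shapes $(ij)^n$ and $i(ji)^n$.

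For $C(u)\leq u$: by definition $C(u)\leq su$ for every $s\in S$; picking any short $s\in S$ (say $s = i(ji)^0 = i$, or $s = ij$ if the convention excludes $n=0$) and peeling off its letters by repeated applications of factivity gives $su\leq u$, hence $C(u)\leq u$.

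For $C(u)\leq C(C(u))$: since each $i$ is a normal box-type operator on the perfect lattice $\mathbb{P}^+$, it preserves arbitrary meets, and hence so does any finite composition $s$. Therefore
\[
C(C(u)) \;=\; \bigwedge_{s\in S} s\, C(u) \;=\; \bigwedge_{s\in S} s\Bigl(\bigwedge_{s'\in S} s'u\Bigr) \;=\; \bigwedge_{s,s'\in S}(ss')u.
\]
For each pair $(s,s')\in S\times S$, the concatenation $ss'$ reduces via the absorption identity $iiu = iu$ to some $t\in S$ with $(ss')u = tu \geq C(u)$; taking meets over all such pairs yields $C(u)\leq C(C(u))$.

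The only real subtlety is the combinatorial check that the alternating reduct of a concatenation of two alternating words is again in $S$. Once $iiu = iu$ is in hand, this is a short bookkeeping argument, and the rest follows from the meet-preservation of box-type operators on a perfect lattice.
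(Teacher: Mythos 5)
Your proof is correct and follows essentially the same route as the paper's: $C(u)\leq 1u\leq u$ for the first inequality, and, for the second, rewriting $C(C(u))=\bigwedge_{s,t\in S}(st)u$ via complete meet-preservation of the box-type operators and comparing with $\bigwedge_{s'\in S}s'u$. The only difference is that you spell out, via the absorption identity $iiu=iu$ and the alternation bookkeeping, the step the paper leaves implicit, namely that each $(st)u$ dominates $C(u)$.
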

Let $R_C, R_s\subseteq A\times X$ for any $s\in S$ be defined as follows: $aR_s x$ iff $a\leq s x$ \ and \ $aR_C x$ iff $a\leq C(x)$. Clearly, $R_C = \bigcap_{s\in S} R_s$. In the standard setting of epistemic logic, the accessibility relations associated with agents do not directly encode the agents' knowledge but rather their uncertainty. Hence, on the relational side, the relation associated with the common knowledge operator is defined as the reflexive transitive closure of the the union of the relations associated with individual agents, which is typically much bigger than those associated with individual agents. In the present setting, relations associated with agents directly encode what agents  positively {\em know} rather than their uncertainty. Consequently, the common knowledge relation $R_C$ is the intersection of the relations $R_s$ encoding the finite iterations, which is typically much smaller.

As both $C$ and every $s\in S$ are compositions of normal box-operators, they are themselves normal box-operators. Hence  the relations $R_C$ and $R_s$ they give rise to are RS-compatible (cf.\ Definition \ref{Def:RSFrame}). Thus, the correspondence reductions discussed in Section \ref{ssec:examples} apply to $C$ and $R_C$, yielding:
\begin{lemma}
The relation $R_C$ defined above verifies the following conditions:
\begin{enumerate}
\item $ R_C\subseteq \ \perp$;
\item $\forall a \forall x(a R_C x \rightarrow R_C^{-1}[x]^{\uparrow}\subseteq R_C[a])$.
\end{enumerate}
\end{lemma}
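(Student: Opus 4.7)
The plan is to obtain the two conditions as direct instances of the correspondence computations carried out in Section~\ref{ssec:examples}, applied to the box-operator $C$ and its associated RS-compatible relation $R_C$.

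First I would observe that, as the paper notes immediately before the statement, $C$ is a composition-type normal box-operator on $\mathbb{P}^+$, and hence $R_C$ (defined via $aR_C x$ iff $a\leq C(x)$ on generators, and extended as in Definition~\ref{Def:RSFrame}) is an RS-compatible relation whose induced box-operator on $\mathbb{P}^+$ is exactly $C$. This is the step that licenses me to read off first-order conditions on $R_C$ from inequalities satisfied by $C$.

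Next, Lemma~\ref{lemma:properties of C} tells us that $C$ satisfies $C(u)\leq u$ (``reflexivity'') and $C(u)\leq C(C(u))$ (``transitivity'') for every $u\in\mathbb{P}^+$. In particular, these inequalities hold when $u$ is instantiated to a co-nominal $\cnomm$, i.e.\ to the Galois-stable set generated by a meet-irreducible $m$. Now the correspondence computation in Section~\ref{ssec:examples} showed that, for any RS-compatible relation $R$ giving rise to a box-operator, the validity of $\Box p\leq p$ (equivalently, $\Box\cnomm\leq\cnomm$) is equivalent to $\forall a\forall m(aRm\to a\perp m)$, i.e.\ to $R\subseteq\ \perp$; this directly yields condition~(1) for $R_C$. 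Similarly, the validity of $\Box p\leq\Box\Box p$ (equivalently, $\Box\cnomm\leq\Box\Box\cnomm$) is equivalent to $\forall a\forall m(aRm\to R^{-1}[m]^{\uparrow}\subseteq R[a])$, yielding condition~(2) for $R_C$.

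So the body of the proof is essentially a one-line appeal: since $C$ is a normal box-operator satisfying the reflexivity and transitivity axioms (Lemma~\ref{lemma:properties of C}), and since $R_C$ is the RS-compatible relation associated with $C$, the correspondence reductions of Section~\ref{ssec:examples} deliver the two displayed conditions. The only potential subtlety---and the step I would be most careful about---is checking that the pointwise definition $aR_C x$ iff $a\leq C(x)$ really does produce the same $R_C$ as $\bigcap_{s\in S}R_s$ and is the RS-compatible relation whose induced $\Box$-operator coincides with the $C$ built from iterated modalities; once this identification is in place, the reductions of Section~\ref{ssec:examples} transfer verbatim.
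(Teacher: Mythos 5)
Your proposal matches the paper's own justification: the paper gives no separate proof, but presents the lemma as an immediate consequence of the facts that $C$ is a normal box-operator satisfying the reflexivity and transitivity inequalities of Lemma \ref{lemma:properties of C}, that $R_C$ is the RS-compatible relation it gives rise to, and that the correspondence reductions of Section \ref{ssec:examples} then yield the two displayed conditions. Your added care about identifying the pointwise definition of $R_C$ with $\bigcap_{s\in S}R_s$ and with the relation inducing $C$ is a reasonable refinement of exactly the same argument, so the approach is essentially identical.
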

For any given category label  $\phi$, the category $C(\phi) = \bigwedge\{C(\cnomm)\mid \phi^{\mathbb{P}^+}\leq \cnomm\}$. For this reason, in what follows we restrict our attention to categories $C(\cnomm)$ for some feature $m\in X$. 
The members of $C(\cnomm)$ are the products in the set $R^{-1}_C[m] = (\bigcap_{s\in S}R_s)^{-1}[m]$, and the description of $C(\cnomm)$ is $R^{-1}_C[m]^{\uparrow} = ((\bigcap_{s\in S}R_s)^{-1}[m])^{\uparrow}$. These can be understood as the socially constructed categories, the membership and description of which are socially agreed upon. Clearly, there are many less of them than candidate categories, which agrees with our intuition. 





\section{Conclusion and further research}
\label{CCL}

In this paper we have proposed an interpretation of RS-semantics in terms of agents' reasoning about objects, their properties and the categories induced by the accompanying relation. We have argued that this semantics is particularly well adapted to this interpretation and, conversely, that through this interpretation one could gain an intuitive understanding of the semantics.

Our proposal has a distinctly epistemic character, but one which differs from standard epistemic logic in at least two respects: firstly, the relations used to interpret the epistemic operators are intended to capture positive knowledge, rather than uncertainty;  secondly, these relations relate objects to features rather than possible worlds to one another. We considered two classical principles of epistemic logic, namely factivity and positive introspection. By applying the correspondence theory of \cite{ConPal13} we computed the relational properties corresponding to these principles, i.e.\ necessary and sufficient conditions on an agent's incidence relation between objects and properties for her knowledge of categories to verify these epistemic principles. Various questions for further investigation remain open here: what is the meaning of other classical epistemic principles, like e.g.\ negative introspection, in this setting?
Are there other principles that should be included in a minimal logic of categorization? Of course, all of this depends on the reasoning abilities and level of access to reality we wish to attribute to agents. Moreover, most standard logical questions remain open: axiomatizations, proof systems, decidability, complexity, etc.

This paper is a first assay in using RS-semantics for reasoning about categorization and, as such, remains quite general in its assumptions. To be of more immediate practical relevance, the  considerations here should be specialized to particular fields of enquiry where categorization plays or could play a prominent role. Below we briefly consider three such fields.

{\it Natural language semantics.} We have seen that the assignments of RS-models support a notion of meaning that is different from the one in classical modal logic, but is recognizably what the meaning of category labels should be: namely, a semantic category specified as the set of its members and the set of features describing it. In natural language semantics, linguistic utterances are assigned a meaning in the same spirit, which generalizes the truth-based semantics of sentences. More generally, categories or concepts are fundamental to the construction of meaning in natural language, since each noun  is naturally associated with a category. Exploring systematic connections between categories and natural language semantics is a promising direction for further research.

{\it Knowledge representation and formal ontologies.} Categories are central to any form of knowledge representation. Description logics \cite{descriptionlogic2003handbook} are one of the dominant paradigms for logical reasoning in  this context. Our formalism represents a different and possibly complementary perspective on the formal ontologies, classification systems, and taxonomies studied there. In particular, the non-distributive nature of category formation and
the two-level separation between objects and features are foreign to the description logics paradigm. It is natural to ask to what degree the various expressive features of description logic (like uniqueness quantification, qualified cardinality restrictions etc.) could be accommodated in in our framework, and future extensions will study this question.


{\it Categorization theory in management science.} As already indicated, this was one of our main sources of inspiration for the proposals of the present paper. Our formalism is a first step in the direction of a formal logical account of the real world phenomena studied by categorization theorists. There are various considerations that make it an attractive framework in which to study categorization and in which to formulate empirically testable hypotheses. We mention two of these reasons: Firstly, it allows one to study the effects of adding or removing objects with new properties and/or properties already associated with other categories, thus allowing for a fine grained analysis of the likely changes in a classification system resulting from innovations of different kinds.

Secondly, our approach gives us all potential categories ``automatically", while only some of them are real, socially agreed upon categories for economic decision-makers. It can therefore serve as a powerful instrument to better study and understand the causes and consequences of the selection of real categories from the broader set of potential ones. To start with, different real world domains could be compared with respect to the ratios of real to potential categories present in them. One reasonable conjecture seems to be that these ratios will depend a lot on competitive dynamics and the matureness of categories, while also having an effect on them. One could the go on to study changes over time in these ratios as well as the differences in ratios---and their changes---among different audiences espousing different classifications.

{\it Extensions and variations.} In closing, we mention two of the many possible extensions of the present framework:  category membership does not need to be absolute, as  products can simultaneously have different grades of membership in different categories. This calls for quantitative, possibly many-valued versions of our semantics. Also, the categories in a given market do not need to be static, but can evolve and change  over time as new products with new features or new combinations of existing features enter the market \cite{wijnberg2004innovation,wijnberg2011classification}. Dynamic versions of our formalism would be suitable to deal with such continuously evolving categorization systems.  

\bibliographystyle{plain}
\bibliography{aiml16}
\appendix
\section{Relational semantics via dual characterization}
\label{Appendix}

The dual correspondence between perfect lattices and RS-polarities serves as a base to generalize the Kripkean semantics of modal logic to logics with possibly non-distributive propositional base. Analogous to the dual correspondence  between Kripke frames and complete and atomic Boolean algebras with operators, one would want a dual correspondence between perfect normal lattice expansions and RS-polarities endowed with additional relations. In \cite[Section 2]{ConPal13}, a method for computing the definition of the relations dually corresponding to normal modal operators was discussed and illustrated for a certain modal signature consisting of unary and binary modal operators.

In this subsection we will report on this method, for an expansion $\mathcal{L}$  of the basic lattice language  with a unary box-modality, canonically interpreted on lattices endowed with a completely meet-preserving operation. Moreover, we will derive, by means of a dual characterization argument, its interpretation on expanded RS-polarities.

We take the connection between the satisfaction relation $\Vdash$ in Kripke frames and the interpretation of modal formulas in BAOs as our guideline: let $\mathbb{F} = (W, R)$ be a Kripke frame. From the satisfaction relation $\Vdash \ \subseteq \ W\times \mathcal{L}$ between states of $\mathbb{F}$ and formulas, an interpretation $\overline{v}: \mathcal{L}\to \mathbb{F}^+$ into the complex algebra of $\mathbb{F}$ can be defined, which is an $\mathcal{L}$-homomorphism, and is obtained as the unique homomorphic extension of the equivalent functional representation of the relation $\Vdash$ as a map $v: \mathsf{PROP}\to \mathbb{F}^+$, defined as $v(p) = {\Vdash}^{-1}[p]$\footnote{Notice that in order for this equivalent functional representation to be well defined, we need to assume that the relation $\Vdash$ is $\mathbb{F}^+$-{\em compatible}, i.e.\ that ${\Vdash}^{-1}[p]\in \mathbb{F}^+$ for every $p\in \mathsf{PROP}$. In the Boolean case, every relation from $W$ to $\mathrm{LML}$ is clearly $\mathbb{F}^+$-compatible, but already in the distributive case this is not so: indeed ${\Vdash}^{-1}[p]$ needs to be an upward- or downward-closed subset of $\mathbb{F}$. This gives rise to the persistency condition, e.g.\ in the relational semantics of intuitionistic logic.}. In this way, interpretations can be derived from satisfaction relations, so that for any $a\in \jty(\mathbb{F}^+)$ and any formula $\phi$,
\begin{equation}\label{eq: desiderata satisfaction}
a\Vdash \phi\quad \mbox{ iff }\quad a\leq \overline{v}(\phi),
\end{equation}
\noindent where, on the left-hand side, $a\in \jty(\mathbb{F}^+)$ is identified with a state of $\mathbb{F}$ via the isomorphism $\mathbb{F}\cong (\mathbb{F}^+)_+$. 
Conversely, consider a perfect lattice with completely meet-preserving operation $\mathbb{C} = (\mathbb{L}, \Box)$, and a homomorphic assignment $\overline{v}: \mathcal{L}\to \mathbb{C}$, and recall that  the complete lattice $\mathbb{L}$ can be identified  with the lattice $\mathbb{P}^+$ arising from some RS-polarity $\mathbb{P} = (A, X, \perp)$. We want to define a suitable relation $R = R_{\Box}$ and satisfaction relation $\Vdash_{\overline{v}}$ satisfying the condition \eqref{eq: desiderata satisfaction}. 
The  method we are going to illustrate hinges on the  dual characterization of  $\overline{v}$ as a pair of relations $(\Vdash_{\overline{v}}, \succ_{\overline{v}})$ such that $\Vdash_{\overline{v}}\ \subseteq\  \jty(\mathbb{L})\times \mathcal{L}\cong A\times \mathcal{L}$ and  $\succ_{\overline{v}}\ \subseteq\ \mty(\mathbb{L})\times \mathcal{L}\cong X\times \mathcal{L}$.
This dual characterization is established by induction on formulas.

The base of the induction is clear: for every $a\in \jty(\mathbb{P}^+)$ and every $p\in \mathsf{PROP} \cup \{0, 1 \}$, we define \begin{equation}\label{eq: basic case} a\Vdash_{\overline{v}} p\quad \mbox{ iff }\quad a\leq \overline{v}(p).\end{equation}

Now let us turn to the inductive step for the box. Since $\overline{v}: \mathcal{L}\to \mathbb{P}^+$ is a  homomorphism,  $\overline{v}(\Box \phi) = \Box^{\mathbb{P}^+}\overline{v}(\phi)$. Suppose that (\ref{eq: desiderata satisfaction}) holds for $\phi$.

Since $\mathbb{P}^+$ is perfect, $\overline{v}(\phi) = \bigwedge \{x \in \mty(\mathbb{L}) \mid \overline{v}(\phi) \leq x\}$. Thus,

\begin{center}
\begin{tabular}{r c l l}
$a\leq \overline{v}(\Box \phi)$& iff &$a\leq \Box^{\mathbb{P}^+} \overline{v}(\phi)$\\
&iff & $a \leq \Box^{\mathbb{P}^+} \bigwedge \{x \in \mty(\mathbb{P}^+) \mid \overline{v}(\phi) \leq x\}$ &\\
& iff  &$a \leq \bigwedge \{\Box^{\mathbb{P}^+} x  \mid x \in \mty(\mathbb{P}^+) \mbox{ and }\overline{v}(\phi) \leq x\}$ &\\
& iff  &$\forall x[(x \in \mty(\mathbb{L}) \ \&\  \overline{v}(\phi) \leq x) \rightarrow a \leq \Box^{\mathbb{P}^+} x]$.\\
\end{tabular}
\end{center}

Notice that, at the end of this chain of equivalence, we have equivalently reduced the whole information on $\Box$ to the information whether $a \leq \Box^{\mathbb{P}^+} x$ for each $a$ and $x$. So this can be taken as the definition of the relation $R\subseteq A\times X$: we let $a Rx$ iff $a \leq \Box^{\mathbb{P}^+} y$.

To turn the last clause above  into a satisfaction clause for $\Box$, we firstly replace $\mty(\mathbb{L})$ with $X$, which we identify via the isomorphism $\mathbb{P}\cong (\mathbb{P}^+)_+$. Secondly, 
we need to recall the  second relation $\succ_{\overline{v}}$ between elements of $X$ and formulas, obeying the following condition, which is to be defined by induction on the structure of the formulas in such a way that the following condition holds,  analogously to (\ref{eq: desiderata satisfaction}):
\begin{equation}\label{eq:desiderata:co:sat}
x \succ \phi \quad \mbox{ iff }\quad \overline{v}(\phi) \leq x.
\end{equation}
These  considerations produce the following satisfaction clause for $\Box$:
\begin{center}
\begin{tabular}{c c c l l}
$a\Vdash_{\overline{v}}\Box \phi $& iff  &$a\leq \overline{v}(\Box \phi)$& iff  &$\forall x[(x \in X \ \&\ x\succ \phi) \rightarrow a R_{\Box}x]$\\
\end{tabular}
\end{center}

The co-satisfaction relation $\succ$ deserves some further comment: in the Boolean and distributive settings, $\succ$ is completely determined by $\Vdash$, and is hence not mentioned explicitly there. Here, in the non-distributive setting, the relation needs to be defined along with $\Vdash$. 
Equation (\ref{eq:desiderata:co:sat}) determines the base case:
\begin{equation}\label{eq:co-satisf basic case} y\succ \overline{v}(p)\quad \mbox{ iff }\quad \overline{v}(p)\leq y.\end{equation}

Specializing the clause above to powerset algebras $\mathcal{P}(W)$, we would have $y \succ_{V} p$ iff $V(p) \leq y$ iff $V(p) \subseteq W/\{ x \}$ for some $x \in W$ iff $\{ x\} \not\subseteq V(p)$ iff $x \notin V(p)$ iff $x \not \Vdash p$, which shows that  the relation $\succ$ can be regarded as an upside-down description of the satisfaction relation $\Vdash$, namely a {\em co-satisfaction}, or {\em refutation}.

The inductive step for the derivation of the co-satisfaction clause for $\Box$ goes as follows:

\begin{center}
\begin{tabular}{r c l l}
$\overline{v}(\Box \phi) \leq x$ & iff &$\bigvee \{ a \in \jty(\mathbb{L}) \mid a \leq \overline{v}(\Box \phi)\} \leq x$\\
& iff  & $\forall a[(a \in \jty(\mathbb{L})\ \&\ a \leq  \overline{v}(\Box \phi)) \rightarrow a\leq x]$\\
& iff  & $\forall a[(a \in A\ \&\ a \Vdash \Box \phi) \rightarrow a \perp x].$
\end{tabular}
\end{center}
The last line follows from equation (\ref{eq: desiderata satisfaction}) for $\Box \phi$, and the identification, via the isomorphism $\mathbb{P}\cong (\mathbb{P}^+)_+$, of $\jty(\mathbb{L})$ with $A$, and of  the lattice order $\leq$ (restricted to $\jty(\mathbb{L}) \times \mty(\mathbb{L})$) with the incidence relation $\perp$ of the polarity.


\section{Proofs of technical lemmas}\label{Appendix:TechLemmas}
\begin{proof} (Lemma \ref{Lemma:Galois:Stable}) We only prove the part concerning $z$. Let $x\in z{\uparrow}^{\downarrow\uparrow}$, and let us show that  $z\leq x$. That is,  let us fix $a$ such that $a\perp z$, and  show that $a\perp x$.
Since $\perp\circ \leq\  \subseteq \ \perp$,  from $a\perp z$ it follows that $\forall y (z\leq y\rightarrow a\perp y)$, which means that $a\in z{\uparrow}^{\downarrow}$. Since by assumption $x\in z{\uparrow}^{\downarrow\uparrow}$, this implies that $a\perp x$, as required.
\end{proof}

\begin{proof} (Corollary \ref{Cor:Up:Down})
Since $z{\uparrow}$ is Galois-stable and contains $z$ and, by definition, $\du{z}$ is the smallest such set, $\du{z}\subseteq z{\uparrow}$. For the converse inclusion, let $z\leq y$  and $a\perp z$. As $\perp\circ \leq\ \subseteq\ \perp$, this implies $a\perp y$, which shows that $y\in \du{z}$, as required.
\end{proof}

\begin{proof} (Lemma \ref{lemma: R circ leq subseteq R})
Assume that $aRz$ and  $z\leq y$. To show that $y\in R[a]$, by the second compatibility condition, it is enough to show that $y\in \du{R[a]}$. That is, let us fix $b\in \down{R[a]}$ and show that $b\perp y$. From  $b\in \down{R[a]}$ and $aRz$ it follows that $b\perp z$. This and $z\leq y$ imply that $b\perp y$, given that $\perp\ \circ\ \leq\ \subseteq\ \perp$. The remaining part is proven similarly.
\end{proof}

\begin{proof}(Lemma \ref{lemma:properties of C})
Clearly, $C(u)\leq 1u\leq u$, which proves the first inequality.
\begin{center}
$C(C(u)) = \bigwedge_{s\in S}sC(u) = \bigwedge_{s\in S}s(\bigwedge_{t\in S}tu) = \bigwedge_{s\in S}\bigwedge_{t\in S}stu \geq \bigwedge_{s'\in S}s'u = C(u)$.
\end{center}
\end{proof}


\end{document}